\newtheorem{assumption}{Assumption}
\numberwithin{equation}{section}
\theoremstyle{plain}
\newtheorem{thm}[equation]{Theorem}
\theoremstyle{definition}
\newtheorem{definition}[equation]{Definition}
\newtheorem{lem}[equation]{Lemma}
\theoremstyle{definition}
\newtheorem{rem}[equation]{Remark}
\newcommand{\SCAL}{{\cdot}}
\newcommand{\SSCAL}{{:}}
\newcommand{\DIV}{\nabla\!{\cdot}}   
\newcommand{\GRAD}{\nabla}           
\newcommand{\LAP}{{\Delta}}          
\def\Ldeuxd{{{ \bf L}^2   (\Omega)}}
\def\Ldeux{{{  L}^2   (\Omega)}}
\def\Hunzd{{{  \bf H}^1_0 (\Omega)}}
\def\Hunz{{{   H}^1_0 (\Omega)}}
\def\Hun{{{    H}^{1}(\Omega)}}
\def\tildeLdeux{{L^2_{\scriptscriptstyle\!\int\!=0} (\Omega)}}
\def\Hdeuxd{{{   \bf H}^2   (\Omega)}}
\newcommand{\Real}{\mathbb R}
\newcommand{\ie}{i.e.,\@\xspace}
\newcommand{\eg}{e.g.\@\xspace}
\newcommand{\etal}{et al.\@\xspace}
\renewcommand{\ae}{a.e.\@\xspace}
\newcommand{\bH}{{\bf H}}
\newcommand{\bL}{{\bf L}}
\newcommand{\bX}{{\bf X}}
\newcommand{\calC}{{\mathcal C}}
\newcommand{\calF}{{\mathcal F}}
\newcommand{\calI}{{\mathcal I}}
\newcommand{\calN}{{\mathcal N}}
\newcommand{\calT}{{\mathcal T}}
\begin{document}
\title[LBB]{A Note on the Lady\v zenskaja-Babu\v ska-Brezzi Condition}

\author[J.~Guzm{\'a}n]{Johnny Guzm{\'a}n}
\thanks{J.~Guzm{\'a}n is supported by NSF grant DMS-0914596.}
\address[J.~Guzm{\'a}n]{Division of Applied Mathematics\\
Brown University\\
 Providence, RI 02912}
\email{johnny\_guzman@brown.edu}

\author[A.J.~Salgado]{Abner J.~Salgado}
\thanks{AJS is supported by NSF grants CBET-0754983 and DMS-0807811 and an AMS-Simons Travel Grant.}
\address[A.J.~Salgado]{Department of Mathematics\\
University of Maryland\\
College Park, MD 20742}
\email{abnersg@math.umd.edu}

\author[F.-J.~Sayas]{Francisco-Javier Sayas}
\address[F.-J.~Sayas]{Department of Mathematical Sciences\\
University of Delaware\\
 Newark, DE 19716}
\email{fjsayas@math.udel.edu}

\keywords{Mixed problems; Stokes problem; inf--sup condition; approximation; LBB}

\subjclass[2010]{76D07, 
65M60, 
65M12. 
}

\date{Submitted to Journal of Scientific Computing on March 8, 2012.}

\begin{abstract}
The analysis of finite-element-like Galerkin discretization techniques for the stationary
Stokes problem relies on the so-called LBB condition.
In this work we discuss equivalent formulations of the LBB condition.
\end{abstract}

\maketitle

\section{Introduction}
\label{sec:Intro}
The well known Lady\v zenskaja-Babu\v ska-Brezzi (LBB) condition is a particular instance of the
so-called
discrete inf--sup condition which is necessary and sufficient for the well-posedness of discrete
saddle point problems
arising from discretization via Galerkin methods. If $\bX_h$ denotes the discrete velocity space and
$M_h$ the discrete
pressure space, then the LBB condition for the Stokes problem states that there is a constant
$c$ independent of the discretization parameter $h$ such that
\begin{equation}
  c \| q_h \|_{L^2} \leq \sup_{v_h \in \bX_h} 
  \frac{ \int_\Omega (\DIV v_h)\, q_h}{\|v_h\|_{\bH^1}},
  \quad \forall q_h \in M_h.
\tag{LBB}
\label{eq:LBB}
\end{equation}
The reader is referred to \cite{GR86} for the basic theory on saddle point problems on Banach spaces
and 
their numerical analysis. Simply put, this condition sets a structural restriction on the discrete
spaces so that
the continuous level property that the divergence operator is closed and surjective, see
\cite{MR82m:26014,MR1880723}, is preserved uniformly with respect to the discretization parameter.

In the literature the following condition, which we shall denote the generalized LBB condition, is
also assumed
\begin{equation}
c \| \GRAD q_h \|_{\bL^2} \leq
  \sup_{v_h \in \bX_h}
  \frac{ \int_\Omega (\DIV v_h)\, q_h }{ \| v_h \|_{\bL^2} },
  \quad \forall q_h \in M_h,
\tag{GLBB}
\label{eq:wLBB}
\end{equation}
here and throughout we assume $M_h \subset H^1(\Omega)$. By properly defining a discrete gradient
operator, the case of discontinuous pressure spaces can be analyzed with similar arguments to those
that we shall present. Condition \eqref{eq:wLBB}, for example, was used by Guermond
(\cite{MR2210084,MR2334774}) to show that approximate solutions to the three-dimensional Navier
Stokes equations constructed using the Faedo-Galerkin method
converge to a suitable, in the sense of Scheffer, weak solution.
On the basis of \eqref{eq:wLBB}, the same author has also built (\cite{MR2520170}) an
$\bH^s$-approximation theory for the Stokes problem, $0\leq s \leq1$.
Olshanski{\u\i}, in \cite{MR2833487}, under the assumption that the spaces satisfy \eqref{eq:wLBB}
carries out a multigrid analysis for the Stokes problem. Finally, Mardal \etal, 
\cite{schoberlwinther}, use a weighted inf--sup condition to analyze preconditioning techniques
for singularly perturbed Stokes problems (see Section \ref{sec:section5} below).

It is not difficult to show that, on quasi-uniform meshes, \eqref{eq:wLBB} implies \eqref{eq:LBB}, see
\cite{MR2210084}. We include the proof of this result below for completeness.
The question that naturally arises is whether the converse holds. Recall that a well-known result of
Fortin \cite{BF91}
shows that the inf--sup condition \eqref{eq:LBB} is equivalent to the existence of a so-called Fortin
projection that is stable in $\Hunzd$.  In this work, under the assumption that the mesh is 
shape regular and quasi-uniform, we will show that \eqref{eq:wLBB} is equivalent
to the existence of a Fortin projection that has $\bL^2$-approximation properties. Moreover,
when the domain is such that the solution to the Stokes problem 
possesses $\bH^2$-regularity, we will prove that 
\eqref{eq:wLBB} is in fact equivalent to \eqref{eq:LBB}, again on quasi-uniform meshes.

The work by Girault and Scott (\cite{MR1961943}) must be mentioned 
when dealing with the construction of Fortin projection operators with $\bL^2$-approximation properties.
They have constructed such operators for many commonly 
used inf--sup stable spaces, one notable exception being the lowest order Taylor-Hood element 
in three dimensions.
However, \eqref{eq:wLBB} has been shown to hold for the lowest order Taylor-Hood
element directly \cite{MR2210084}.
Our results then can be applied to show that, \eqref{eq:wLBB} is satisfied 
by almost all inf--sup stable finite element spaces, regardless of the smoothness of the domain.

This work is organized as follows.
Section~\ref{sec:prel} introduces the notation and assumptions we shall work with. Condition
\eqref{eq:wLBB} is discussed in Section~\ref{sec:wLBB}. In Section~\ref{sec:Equiv} we actually show the
equivalence of conditions \eqref{eq:LBB} and \eqref{eq:wLBB}, provided the domain is smooth enough.
A weighted inf--sup condition related to uniform preconditioning of the time-dependent Stokes problem
is presented in Section~\ref{sec:section5}, where we show that \eqref{eq:wLBB} implies it.
Some concluding remarks are provided in Section~\ref{sec:conclusion}.

\section{Preliminaries}
\label{sec:prel}
Throughout this work, we will denote by $\Omega \subset \Real^d$ with $d=2$ or $3$ an open bounded
domain with Lipschitz boundary. If additional smoothness of the domain is needed, it will be specified
explicitly.
$\Ldeux$, $\Hun$  and $\Hunz$ denote, respectively, the usual Lebesgue and Sobolev spaces.
We denote by $\tildeLdeux$ the set of functions in $\Ldeux$ with mean zero.
Vector valued spaces will be denoted by bold characters.

We introduce a conforming triangulation $\calT_h$ of $\Omega$ which we assume shape-regular and
quasi-uniform in the sense of \cite{BF91}. The size of the cells in the triangulation is characterized by 
$h>0$. We introduce finite dimensional spaces $\bX_h \subset \Hunzd$ and 
$M_h \subset \tildeLdeux \cap \Hun$ which are
constructed, for instance using finite elements,
on the triangulation $\calT_h$. For these spaces, the inverse inequalities
\begin{equation}
  \| v_h \|_{\bH^1} \leq c h^{-1} \| v_h \|_{\bL^2}, \quad \forall v_h \in \bX_h,
\label{eq:invX}
\end{equation}
and
\begin{equation}
  \| q_h \|_{H^1} \leq c h^{-1} \| q_h \|_{L^2}, \quad \forall q_h \in M_h,
\label{eq:invM}
\end{equation}
hold, see \cite{BF91}. Here and in what follows we denote by $c$ will a constant that is independent of $h$. 

We shall denote by $\calC_h : \Hunzd \rightarrow \bX_h$ the so-called 
Scott-Zhang interpolation operator (\cite{SZ90}) onto the velocity space and we recall that
\begin{equation}
  \| v - \calC_h v \|_{\bL^2} + h \|\calC_h v\|_{\bH^1} \leq c h \| v \|_{\bH^1}, \quad \forall v
\in \Hunzd.
\label{eq:SZprop}
\end{equation}
and
\begin{equation}
  \| v - \calC_h v\|_{\bH^1} \leq c h \| v \|_{\bH^2}, \quad \forall v \in \Hunzd \cap \Hdeuxd
\label{eq:SZh1}
\end{equation}
The Scott-Zhang interpolation operator onto the pressure space
$\calI_h: \tildeLdeux \rightarrow M_h$ can be defined analogously
and satisfies similar stability and approximation properties.
We shall denote by $\pi_h: \Ldeuxd \rightarrow \bX_h$ the $\bL^2$-projection onto $\bX_h$ and
by $\Pi_0 : \Ldeux \rightarrow \Ldeux$ the $L^2$-projection operator onto the space
of piecewise constant functions, \ie
\[
  \Pi_0 q = \sum_{T \in \calT_h} \frac1{|T|}\left(\int_T q\right)\chi_T, \quad \forall q \in \Ldeux.
\]

For one result below we shall require full $\bH^2$-regularity of the solution to the Stokes problem:

\begin{assumption}\label{assumption1}
The domain $\Omega$ is such that for any $f\in \Ldeuxd$, the solution
$(\psi,\theta) \in \Hunzd \times \tildeLdeux$ to the Stokes problem
\begin{equation}
\label{eq:contstokes}
  \begin{dcases}
    -\LAP \psi + \GRAD \theta = f, & \text{in } \Omega, \\
    \DIV \psi = 0, & \text{in } \Omega, \\
    \psi = 0, & \text{on } \partial\Omega,
  \end{dcases}
\end{equation}
satisfies the following estimate:
\begin{equation}
  \| \psi \|_{\bH^2} + \| \theta \|_{H^1} \leq c \| f \|_{\bL^2}.
\label{eq:Cattabriga}
\end{equation}
\end{assumption}

Assumption~\ref{assumption1} is known to hold in two and three dimensions ($d=2,3$) whenever
$\Omega$ is convex or of class $\calC^{1,1}$, see \cite[Theorem 6.3]{MR977489}.

By suitably defining a discrete gradient operator acting on the pressure space, the proofs for 
discontinuous pressure spaces can be carried out with similar arguments.

We introduce the definition of a Fortin projection.
\begin{definition}
\label{def:Fortin}
An operator $\calF_h : \Hunzd \rightarrow \bX_h$ is called a Fortin projection if $\calF_h^2 =
\calF_h$ and
\begin{equation}
  \int_\Omega \DIV(v-\calF_h v)q_h = 0, \quad \forall v \in \Hunzd, \quad \forall q_h \in M_h.
\label{eq:Fortin}
\end{equation}
\end{definition}

We shall be interested in Fortin projections $\calF_h$ that satisfy the condition:
\begin{equation}
  \| \calF_h v \|_{\bH^1} \leq c \| v \|_{\bH^1}, \quad \forall v \in \Hunzd,
\tag{FH1}
\label{eq:fh1}
\end{equation}
or
\begin{equation}
  \| v - \calF_h v \|_{\bL^2} \leq c h \| v\|_{\bH^1}, \quad \forall v \in \Hunzd.
\tag{FL2}
\label{eq:fl2}
\end{equation}

Let us remark that the approximation property \eqref{eq:fl2} implies $\bH^1$-stability.
\begin{lem}
\label{lem:fl2implfh1}
If an operator $\calF_h : \Hunzd \rightarrow \bX_h$ satisfies \eqref{eq:fl2} then it is $\bH^1$-stable, i.e., \eqref{eq:fh1} is satisfied.
\end{lem}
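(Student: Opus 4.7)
My plan is a straightforward combination of the triangle inequality, the inverse inequality \eqref{eq:invX}, and the stability/approximation properties \eqref{eq:SZprop} of the Scott--Zhang interpolant. The issue is that one cannot apply \eqref{eq:invX} directly to $v$, so a discrete function must be inserted before bounding in $\bL^2$.

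Given $v \in \Hunzd$, I would write
\[
  \|\calF_h v\|_{\bH^1} \le \|\calF_h v - \calC_h v\|_{\bH^1} + \|\calC_h v\|_{\bH^1}.
\]
The second term is immediately controlled by $c\|v\|_{\bH^1}$ via \eqref{eq:SZprop}. For the first term, since $\calF_h v - \calC_h v \in \bX_h$, the inverse inequality \eqref{eq:invX} yields
\[
  \|\calF_h v - \calC_h v\|_{\bH^1} \le c h^{-1}\|\calF_h v - \calC_h v\|_{\bL^2}.
\]

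Next I would split
\[
  \|\calF_h v - \calC_h v\|_{\bL^2} \le \|v - \calF_h v\|_{\bL^2} + \|v - \calC_h v\|_{\bL^2},
\]
and bound each piece by $c h \|v\|_{\bH^1}$ using the hypothesis \eqref{eq:fl2} and \eqref{eq:SZprop}, respectively. Multiplying by $h^{-1}$ absorbs the factor of $h$ and gives the desired $\bH^1$-bound.

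There is no real obstacle here: the proof is essentially a super-approximation argument that exploits the quasi-uniformity of the mesh through the inverse inequality. The only thing to be mindful of is that the auxiliary operator used in the triangle inequality must be both $\bH^1$-stable and $\bL^2$-approximating on $\Hunzd$, which is exactly what the Scott--Zhang operator $\calC_h$ provides via \eqref{eq:SZprop}.
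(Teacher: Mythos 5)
Your proof is correct and is essentially identical to the paper's own argument: both insert the Scott--Zhang interpolant $\calC_h v$, apply the inverse inequality \eqref{eq:invX} to the discrete difference $\calF_h v - \calC_h v$, and then split via the triangle inequality so that \eqref{eq:fl2} and \eqref{eq:SZprop} each absorb the factor $h^{-1}$. No discrepancies to report.
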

\begin{proof}
The proof relies on the stability and approximation properties \eqref{eq:SZprop} of the Scott-Zhang operator
and on the inverse estimate \eqref{eq:invX}, for if $v \in \Hunzd$,
\begin{align*}
  \| \calF_h v \|_{\bH^1} &\leq \| \calF_h v - \calC_h v \|_{\bH^1} + c \| v \|_{\bH^1}
  \leq c h^{-1} \| \calF_h v - \calC_h v \|_{\bL^2} + c \| v \|_{\bH^1} \\
  &\leq ch^{-1} \| v - \calF_h v \|_{\bL^2} + ch^{-1}\| v - \calC_h v \|_{\bL^2} + c \|v\|_{\bH^1}.
\end{align*}
Conclude using the $\bL^2$-approximation properties of the operators $\calF_h$ and $\calC_h$.
\end{proof}

\begin{rem}
Girault and Scott, \cite{MR1961943}, explicitly constructed a Fortin projection 
that satisfies \eqref{eq:fh1} and \eqref{eq:fl2} for many
commonly used spaces. In fact, they showed that the approximation is local, \ie
\[
  \| \calF_h v - v \|_{\bL^2(T)}+ h_T\| \calF_h v -v\|_{\bH^1(T)}  \leq c h_T \| v
\|_{\bH^1(\calN(T))}, \quad \forall v \in \Hunzd
   \text{ and } \forall T \in \calT_h,
\]
where $\calN(T)$ is a patch containing $T$.
In particular, they have shown the existence of this projection for the Taylor-Hood elements in two 
dimensions. In three dimensions they proved this result for all the Taylor-Hood elements except the 
lowest order case. 
\end{rem}

In this work we shall prove the implications
\[
  \xymatrix{
    \eqref{eq:LBB} \ar@{<=>}[r] \ar@{<=}[d]
    &\exists \calF_h \mbox{\, s.t.\,} \eqref{eq:Fortin} \text{ and } \eqref{eq:fh1} & \\
    \eqref{eq:wLBB} \ar@{<=>}[r]
    &\exists \calF_h \mbox{\, s.t.\,} \eqref{eq:Fortin} \text{ and } \eqref{eq:fl2} &    \eqref{eq:LBB} \text{ and 
Assumption  } \ref{assumption1} \ar@{=>}[l] 
  }
\]
thus showing that, in our setting, all these conditions are indeed equivalent. 
The top equivalence is
well-known, see \cite{BF91,GR86,MR2050138}. The left implication is also known (see
\cite{MR2210084}), for completeness we show
this in Theorem~\ref{thm:wlbbimpllbb}. The bottom implications, although simple to prove, seem to be
new. 

\section{The Generalized LBB Condition}
\label{sec:wLBB}
Let us begin by noticing that the generalized LBB condition \eqref{eq:wLBB} is actually a statement
about coercivity of the $\bL^2$-projection on gradients of functions in the pressure space. Namely,
\eqref{eq:wLBB} is equivalent to
\begin{equation}\label{GLBBb}
  \| \pi_h \GRAD q_h \|_{\bL^2} \geq c \| \GRAD q_h \|_{\bL^2}, \quad \forall q_h \in M_h.
\end{equation}

It is well known that \eqref{eq:wLBB} implies \eqref{eq:LBB}. For completeness we present the proof.
We
begin with a perturbation result.

\begin{lem}
\label{lem:verfurth}
There exists a constant $c$ independent of $h$ such that, for all $q_h \in M_h$, the following holds:
\[
  c \| q_h \|_{L^2} \leq 
  \sup_{v_h \in \bX_h} \frac{ \int_\Omega (\DIV v_h)\, q_h }{\| \GRAD v_h \|_{\bL^2} }
  + h \| \GRAD q_h \|_{\bL^2}.
\]
\end{lem}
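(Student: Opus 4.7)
The plan is to use a standard Verfürth-type perturbation argument that lifts the continuous inf--sup condition to the discrete level, paying for the mismatch with the $h\|\GRAD q_h\|_{\bL^2}$ term.

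First, given $q_h \in M_h \subset \tildeLdeux$, I would invoke the continuous surjectivity of $\DIV : \Hunzd \to \tildeLdeux$ to pick $v \in \Hunzd$ with $\DIV v = q_h$ and $\|v\|_{\bH^1} \leq c\|q_h\|_{L^2}$. The discrete test function is then $\calC_h v \in \bX_h$, and the identity
\[
  \int_\Omega (\DIV \calC_h v)\, q_h = \|q_h\|_{L^2}^2 + \int_\Omega \DIV(\calC_h v - v)\, q_h
\]
is the starting point. The inclusion $M_h \subset \Hun$ lets me integrate by parts in the second term (the boundary term vanishes because $\calC_h v - v \in \Hunzd$), turning it into $-\int_\Omega (\calC_h v - v)\SCAL \GRAD q_h$.

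Next, Cauchy--Schwarz together with the $\bL^2$-approximation property \eqref{eq:SZprop} of $\calC_h$ give
\[
  \left|\int_\Omega (\calC_h v - v)\SCAL \GRAD q_h\right|
  \leq \|\calC_h v - v\|_{\bL^2}\, \|\GRAD q_h\|_{\bL^2}
  \leq c\, h\, \|v\|_{\bH^1}\, \|\GRAD q_h\|_{\bL^2}
  \leq c\, h\, \|q_h\|_{L^2}\, \|\GRAD q_h\|_{\bL^2}.
\]
Dividing the resulting inequality $\|q_h\|_{L^2}^2 \leq \int_\Omega(\DIV \calC_h v)q_h + ch\|q_h\|_{L^2}\|\GRAD q_h\|_{\bL^2}$ by $\|q_h\|_{L^2}$ produces a bound of the right shape.

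Finally, I would convert the numerator to the advertised supremum: the $\bH^1$-stability of $\calC_h$ in \eqref{eq:SZprop} and the continuous bound on $v$ yield $\|\calC_h v\|_{\bH^1} \leq c\|q_h\|_{L^2}$, and Poincaré's inequality on $\Hunzd$ gives $\|\calC_h v\|_{\bH^1} \leq c\|\GRAD \calC_h v\|_{\bL^2}$, so
\[
  \frac{\int_\Omega (\DIV \calC_h v)\, q_h}{\|q_h\|_{L^2}}
  \leq c\, \frac{\int_\Omega (\DIV \calC_h v)\, q_h}{\|\GRAD \calC_h v\|_{\bL^2}}
  \leq c \sup_{v_h \in \bX_h} \frac{\int_\Omega (\DIV v_h)\, q_h}{\|\GRAD v_h\|_{\bL^2}}.
\]
No step looks like a real obstacle; the one mild subtlety is remembering that the denominator in the target is $\|\GRAD v_h\|_{\bL^2}$ rather than $\|v_h\|_{\bH^1}$, which is harmless on $\Hunzd$ via Poincaré, and that integrating by parts against $\GRAD q_h$ requires $M_h \subset \Hun$, which is part of our standing hypotheses.
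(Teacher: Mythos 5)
Your proof is correct and is essentially the paper's own argument: both rest on the continuous inf--sup condition (equivalently, the bounded right inverse of $\DIV$ on $\Hunzd$), replace $v$ by its Scott--Zhang interpolant $\calC_h v$, integrate by parts against $\GRAD q_h$ using $M_h \subset \Hun$, and invoke the $\bL^2$-approximation and $\bH^1$-stability in \eqref{eq:SZprop} to produce the $h\|\GRAD q_h\|_{\bL^2}$ term. The only difference is presentational: you work with an explicit witness $v$ satisfying $\DIV v = q_h$, while the paper splits the supremum over $\Hunzd$ directly, which amounts to the same thing.
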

\begin{proof}
The proof relies on the properties \eqref{eq:SZprop} of the Scott-Zhang interpolation operator $\calC_h$,
\begin{align*}
  c \|q_h\|_{L^2} &\leq \sup_{v \in \Hunzd} \frac{ \int_\Omega (\DIV v)\, \, q_h }{\| \GRAD v
\|_{\bL^2} }
  \leq 
  \sup_{ v \in \Hunzd} \frac{ \int_\Omega (\DIV\,\calC_h v)\, q_h }{\| \GRAD (\calC_h v)
\|_{\bL^2} }
  +
  \sup_{ v \in \Hunzd}\frac{\int_\Omega\big(\DIV\left(v - \calC_h v \right)\big)q_h }{\|\GRAD
v\|_{\bL^2} }
  \\ &\leq
  \sup_{ v_h \in \bX_h} \frac{ \int_\Omega (\DIV v_h) \, q_h }{\| \GRAD v_h \|_{\bL^2} }
  + \sup_{ v \in \Hunzd} \frac{ \int_\Omega\left(v - \calC_h v \right)\SCAL\GRAD q_h}{\|\GRAD
v\|_{\bL^2}},
\end{align*}
conclude using \eqref{eq:SZprop}.
\end{proof}
On the basis of Lemma~\ref{lem:verfurth} we can readily show that \eqref{eq:wLBB} implies \eqref{eq:LBB}. Again,
this result is not new and we only include the proof for completeness.
\begin{thm}
\label{thm:wlbbimpllbb}
\eqref{eq:wLBB} implies \eqref{eq:LBB}.
\end{thm}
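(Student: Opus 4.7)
The plan is to combine Lemma~\ref{lem:verfurth} with the generalized condition \eqref{eq:wLBB} and the inverse estimate \eqref{eq:invX} to absorb the perturbative term $h\|\GRAD q_h\|_{\bL^2}$ into a genuine \eqref{eq:LBB}-type supremum.

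First, I would start from Lemma~\ref{lem:verfurth}, which already delivers the desired lower bound on $\|q_h\|_{L^2}$ up to the additive error $h\,\|\GRAD q_h\|_{\bL^2}$. The whole task therefore reduces to controlling this error by the \eqref{eq:LBB} supremum.

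Second, I would feed \eqref{eq:wLBB} into this error term. This replaces $\|\GRAD q_h\|_{\bL^2}$ by $c^{-1}\sup_{v_h\in\bX_h} \int_\Omega (\DIV v_h)q_h / \|v_h\|_{\bL^2}$, so that
\[
  h\|\GRAD q_h\|_{\bL^2} \leq c\, h\, \sup_{v_h\in\bX_h} \frac{\int_\Omega(\DIV v_h)\,q_h}{\|v_h\|_{\bL^2}}.
\]
The only thing that differs from the \eqref{eq:LBB} supremum is that the denominator uses $\|v_h\|_{\bL^2}$ instead of $\|v_h\|_{\bH^1}$, and there is an extra factor of $h$ out front. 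The inverse inequality \eqref{eq:invX} gives exactly $\|v_h\|_{\bL^2} \geq c\,h\,\|v_h\|_{\bH^1}$, so substituting in the denominator cancels the spare factor of $h$ and yields
\[
  h\|\GRAD q_h\|_{\bL^2} \leq c \sup_{v_h\in\bX_h} \frac{\int_\Omega(\DIV v_h)\,q_h}{\|v_h\|_{\bH^1}}.
\]

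Third, I would insert this back into the inequality from Lemma~\ref{lem:verfurth}, noting that $\|\GRAD v_h\|_{\bL^2}$ and $\|v_h\|_{\bH^1}$ are equivalent on $\Hunzd$ by Poincaré, and conclude \eqref{eq:LBB} with a new constant. There is no real obstacle here; the argument is a one-line chaining of the three ingredients (Lemma~\ref{lem:verfurth}, \eqref{eq:wLBB}, \eqref{eq:invX}), and the only thing to be careful about is that the inverse estimate is used in the \emph{right} direction so that the prefactor $h$ is absorbed rather than amplified.
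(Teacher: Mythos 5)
Your proof is correct and follows essentially the same route as the paper: Lemma~\ref{lem:verfurth}, followed by absorption of the perturbation term $h\|\GRAD q_h\|_{\bL^2}$ using \eqref{eq:wLBB} together with the inverse inequality \eqref{eq:invX}. The only cosmetic difference is that the paper applies the inverse inequality to the particular (optimal) test function $\pi_h\GRAD q_h$ via the characterization \eqref{GLBBb}, whereas you trade the denominator $\|v_h\|_{\bL^2}$ for $ch\|v_h\|_{\bH^1}$ uniformly over the supremum; the two implementations are equivalent.
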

\begin{proof}
Since we assumed that $M_h \subset \tildeLdeux \cap \Hun$, the proof is straightforward:
\[
  \sup_{ v_h \in \bX_h} \frac{ \int_\Omega (\DIV v_h)\, q_h }{\|\GRAD v_h \|_{\bL^2} }
  =
  \sup_{ v_h \in \bX_h} \frac{ \int_\Omega v_h \SCAL \GRAD q_h }{\|\GRAD v_h \|_{\bL^2} }
  \geq
  \frac{ \int_\Omega \pi_h \GRAD q_h \SCAL\GRAD q_h }{\|\GRAD \pi_h \GRAD q_h \|_{\bL^2} }
  =
  \frac{ \|\pi_h \GRAD q_h \|_{\bL^2}^2 }{ \| \GRAD \pi_h \GRAD q_h \|_{\bL^2} }
  \geq
  c h \|\pi_h \GRAD q_h \|_{\bL^2}
\]
where, in the last step, we used the inverse inequality \eqref{eq:invX}.
This, in conjunction with Lemma~\ref{lem:verfurth} and the characterization \eqref{GLBBb}, implies the result.
\end{proof}

Let us now show that the generalized LBB condition 
\eqref{eq:wLBB} is equivalent to the existence of a Fortin operator satisfying \eqref{eq:fl2}.
We begin with a modification of a classical result.

\begin{lem}
\label{lem:tartarwithh}
For all $p\in \Hun$ there is $v\in\Hunzd$ such that
\[
    \DIV v = p - \Pi_0 p, \qquad v|_{\partial T}=0 \quad \forall T\in \calT_h,
\]
and
\[
  \| v \|_{\bL^2} \leq c \left( \sum_{T \in \calT_h} h_T^4 \| \GRAD p \|_{\bL^2(T)}^2 \right)^{1/2}.
\]
\end{lem}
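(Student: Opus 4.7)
The plan is to build $v$ element by element using the Bogovski\u{\i} right-inverse of the divergence on each $T$, then glue the local pieces together by extension by zero. Since $\Pi_0 p$ is the piecewise-constant $L^2$-projection, $(p-\Pi_0 p)|_T$ has zero mean on each $T\in\calT_h$, which is precisely the compatibility condition required by Bogovski\u{\i}. On each $T$ I would thus obtain $v_T\in\bH^1_0(T)$ with $\DIV v_T=(p-\Pi_0 p)|_T$ and
\[
  \|\GRAD v_T\|_{\bL^2(T)} \leq c\,\|p-\Pi_0 p\|_{L^2(T)},
\]
where a standard scaling argument to a fixed reference simplex (available because $\calT_h$ is shape-regular) ensures the constant is independent of $T$.

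Having produced a local $v_T$ on every element, I would extend each by zero to all of $\Omega$ and set $v=\sum_{T\in\calT_h} v_T$. Because $v_T$ has vanishing trace on $\partial T$, the assembled function lies in $\Hunzd$, satisfies $v|_{\partial T}=0$ for every $T$ by construction, and gives $\DIV v=p-\Pi_0 p$ globally.

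For the norm bound I would chain two Poincar\'e-type inequalities. On the one hand, the zero-mean Poincar\'e inequality on $T$ gives $\|p-\Pi_0 p\|_{L^2(T)}\leq c\,h_T\,\|\GRAD p\|_{\bL^2(T)}$. On the other, since $v_T$ vanishes on $\partial T$, the Friedrichs inequality gives $\|v_T\|_{\bL^2(T)}\leq c\,h_T\,\|\GRAD v_T\|_{\bL^2(T)}$. Combining these with the Bogovski\u{\i} estimate yields $\|v_T\|_{\bL^2(T)}\leq c\,h_T^2\,\|\GRAD p\|_{\bL^2(T)}$; squaring, summing over $T$, and using that the supports of the $v_T$ are essentially disjoint produces the claimed inequality.

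The only delicate point is ensuring that the Bogovski\u{\i} constant is uniform in $T$. This is the classical scaling argument: one constructs a right-inverse on a star-shaped reference element and transports it to $T$ via an affine map, with bounds depending only on the shape-regularity constant. I would simply invoke this standard fact rather than reprove it, since shape-regularity is already assumed throughout.
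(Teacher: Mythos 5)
Your proposal is correct and follows essentially the same route as the paper's proof: a local Bogovski\u{\i} right-inverse $v_T\in\bH^1_0(T)$ on each element (valid since $p-\Pi_0 p$ has zero mean on $T$), with the constant made uniform by scaling to a reference element under shape regularity, followed by gluing and the combination of the Friedrichs inequality for $v_T$ with the Poincar\'e estimate $\|p-\Pi_0 p\|_{L^2(T)}\leq c\,h_T\|\GRAD p\|_{\bL^2(T)}$. The only cosmetic difference is that you phrase the assembly as a sum of zero-extensions while the paper defines $v$ piecewise; these are the same construction.
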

\begin{proof}
Let $p\in \Hun$ and $T \in \calT_h$. Clearly,
\[
  \int_T p - \Pi_0 p = 0.
\]
A classical result (\cite{MR82m:26014,MR1846644,GR86,MR1880723}) implies that there is a 
$v_T \in \bH^1_0(T)$ with
$ \DIV v_T = p - \Pi_0 p$
in $T$ and
\begin{equation}
  \| \GRAD v_T \|_{\bL^2(T)} \leq c \| p - \Pi_0 p \|_{L^2(T)}.
\label{eq:saves}
\end{equation}
Given that the mesh is assumed to be shape regular, by mapping to the reference element it is seen
that
the constant in the last inequality does not depend on $T \in \calT_h$.

Let $v \in \Hunzd$ be defined as $ v|_T = v_T$ for all $T$ in $\calT_h$. By construction,
\[
  \DIV v = p - \Pi_0 p, \quad \text{\ae in } \Omega.
\]
Moreover,
\[
  \| v \|_{\bL^2}^2 = \sum_{T \in \calT_h} \| v \|_{\bL^2(T)}^2
    \leq c \sum_{T \in \calT_h} h_T^2 \| \GRAD v \|_{\bL^2(T)}^2
    \leq c \sum_{T \in \calT_h} h_T^2 \| p - \Pi_0 p \|_{L^2(T)}^2
    \leq c \sum_{T \in \calT_h} h_T^4 \| \GRAD p \|_{\bL^2(T)}^2.
\]
The first equality is by definition; then we applied the Poincar\'e-Friedrichs inequality (since
$v|_T = v_T \in \bH^1_0(T)$); next we used the properties of the function $v_T$ and the
approximation properties of the projector $\Pi_0$.
\end{proof}

With this result at hand we can prove the following.

\begin{thm}
\label{thm:fl2implwlbb}
If there exists a Fortin operator $\calF_h$ that satisfies \eqref{eq:fl2}, then
\eqref{eq:wLBB} holds.
\end{thm}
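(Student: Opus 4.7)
The plan is to exhibit, for each $q_h \in M_h$, a test function $v_h \in \bX_h$ that directly witnesses \eqref{eq:wLBB}. The essential input is Lemma \ref{lem:tartarwithh}. Applied with $p = q_h$, it produces $v \in \Hunzd$ with $\DIV v = q_h - \Pi_0 q_h$, vanishing on each $\partial T$. Because of the quasi-uniformity of the mesh, its conclusion yields
\[
\|v\|_{\bL^2} \leq c\, h^2 \|\GRAD q_h\|_{\bL^2},
\]
while \eqref{eq:saves}, summed over the triangulation, also gives $\|\GRAD v\|_{\bL^2} \leq c\, h \|\GRAD q_h\|_{\bL^2}$, hence $\|v\|_{\bH^1} \leq c\, h \|\GRAD q_h\|_{\bL^2}$. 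The extra power of $h$ in the $\bL^2$ bound (compared with the usual Bogovski\u\i\ estimate) is what makes the argument tick.

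Now set $v_h := \calF_h v \in \bX_h$. The Fortin property \eqref{eq:Fortin} together with $L^2$-orthogonality of $\Pi_0$ gives
\[
\int_\Omega (\DIV v_h)\, q_h = \int_\Omega (\DIV v)\, q_h = \int_\Omega (q_h - \Pi_0 q_h)\, q_h = \|q_h - \Pi_0 q_h\|_{L^2}^2.
\]
To convert the right-hand side into a lower bound involving $\|\GRAD q_h\|_{\bL^2}^2$, I use a local inverse-type estimate: on each element $T$ we have $\GRAD q_h = \GRAD(q_h - \Pi_0 q_h)$ and $q_h$ is polynomial of bounded degree there, so by scaling $h_T \|\GRAD q_h\|_{\bL^2(T)} \leq c \|q_h - \Pi_0 q_h\|_{L^2(T)}$. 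Squaring and summing yields
\[
\int_\Omega (\DIV v_h)\, q_h \geq c\, h^2 \|\GRAD q_h\|_{\bL^2}^2.
\]

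For the denominator, the assumed approximation property \eqref{eq:fl2} and the first-paragraph bounds on $v$ give
\[
\|v_h\|_{\bL^2} \leq \|v\|_{\bL^2} + \|v - \calF_h v\|_{\bL^2} \leq \|v\|_{\bL^2} + c\, h \|v\|_{\bH^1} \leq c\, h^2 \|\GRAD q_h\|_{\bL^2}.
\]
Dividing, the $h^2$ factors cancel and I obtain
\[
\frac{\int_\Omega (\DIV v_h)\, q_h}{\|v_h\|_{\bL^2}} \geq c\, \|\GRAD q_h\|_{\bL^2},
\]
which is \eqref{eq:wLBB}.

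I do not foresee a genuine obstacle; the argument is a clean bookkeeping exercise once Lemma \ref{lem:tartarwithh} is in hand. The one place where care is needed is matching of scales: the $\calF_h$ approximation error contributes a term of order $h \|v\|_{\bH^1}$ to $\|v_h\|_{\bL^2}$, and it is precisely the vanishing of $v$ on every element boundary in Lemma \ref{lem:tartarwithh} that buys the additional factor of $h$ needed to prevent this contribution from swamping $\|v\|_{\bL^2}$. If one tried instead to apply the standard right-inverse of the divergence on $\Omega$, the denominator would only be $O(h)$ and the resulting bound would be off by one power of $h$.
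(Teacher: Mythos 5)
Your proof is correct and is essentially the paper's own argument: the same application of Lemma~\ref{lem:tartarwithh} with $p=q_h$, the same test function $\calF_h v$, and the same $h^2$-versus-$h^2$ scaling bookkeeping via \eqref{eq:saves} and \eqref{eq:fl2}. The only cosmetic difference is that you eliminate the $\Pi_0 q_h$ term using $L^2$-orthogonality of $\Pi_0$, whereas the paper does so by elementwise integration by parts using $v|_{\partial T}=0$; both are valid.
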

\begin{proof}
Let $q_h \in M_h$. Using the properties of the operator $\Pi_0$ and 
the local analogue of
the inverse inequality \eqref{eq:invM}, we get
\[
  \| \GRAD q_h \|_{\bL^2}^2 
  = \sum_{T \in \calT_h} \left\| \GRAD \left(q_h - \Pi_0 q_h \right) \right\|_{\bL^2(T)}^2
  \leq \sum_{T \in \calT_h} \frac1{h_T^2} \| q_h - \Pi_0 q_h \|_{\bL^2(T)}^2
  \leq \frac{c}{h^2} \| q_h - \Pi_0 q_h \|_{\bL^2}^2.
\]
From Lemma~\ref{lem:tartarwithh} we know there exists $v \in \Hunzd$ with $\DIV v = q_h - \Pi_0 q_h$
and
\[
  \| v \|_{\bL^2} \leq c h^2 \| \GRAD q_h \|_{\bL^2},
\]
hence
\[
  \| \GRAD q_h \|_{\bL^2}^2 \leq \frac{c}{h^2} \| q_h - \Pi_0 q_h \|_{L^2}^2
  = \frac{c}{h^2} \int_\Omega (\DIV v) \, (q_h-\Pi_0 q_h)
  = \frac{c}{h^2} \int_\Omega (\DIV v) \, q_h,
\]
where the last inequality follows from integration by parts over each $T$ and using the fact that $v|_{\partial T} = 0$ (see Lemma~\ref{lem:tartarwithh}).

Using the existence of the operator $\calF_h$,
\[
  \| \GRAD q_h \|_{\bL^2}^2 \le \frac{c}{h^2} \int_\Omega (\DIV \,\calF_h v) q_h
  \leq \left( \sup_{ w_h \in \bX_h} \frac{ \int_\Omega (\DIV w_h) \, q_h }{ \| w_h \|_{\bL^2}
}\right) 
  \frac{c}{h^2}\| \calF_h v \|_{\bL^2}.
\]
It remains to show that 
\[
  \| \calF_h v \|_{\bL^2} \leq c h^2 \| \GRAD q_h \|_{\bL^2}.
\]
For this purpose, we use the approximation property \eqref{eq:fl2} and
Lemma~\ref{lem:tartarwithh}
\[
  \| \calF_h v \|_{\bL^2} \leq \| \calF_h v - v\|_{\bL^2} + \|v \|_{\bL^2}
  \leq ch \| \GRAD v \|_{\bL^2} + c h^2 \|\GRAD q_h \|_{\bL^2}
  \leq ch^2 \| \GRAD q_h \|_{\bL^2},
\]
where the last inequality holds because of \eqref{eq:saves}.
\end{proof}

The converse of Theorem~\ref{thm:fl2implwlbb} is given in the following.

\begin{thm}
\label{thm:wlbbimplfl2}
If \eqref{eq:wLBB} holds, then there exists a Fortin projector $\calF_h$ that satisfies
\eqref{eq:fl2}.
\end{thm}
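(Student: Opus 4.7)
The plan is to mimic the classical Fortin construction, but measured in $\bL^2$ instead of $\bH^1$: I will combine the Scott-Zhang operator $\calC_h$ with a linear correction $B_h:\Hunzd\to\bX_h$ derived from \eqref{eq:wLBB} and set $\calF_h v := \calC_h v + B_h v$.

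First, I would reformulate \eqref{eq:wLBB} as a boundedness statement for a right inverse of the discrete divergence. Since $M_h \subset \tildeLdeux$, the quantity $\|\GRAD \cdot\|_{\bL^2}$ is a genuine norm on $M_h$, and \eqref{eq:wLBB} is exactly the inf-sup condition for the bilinear form $(v_h, q_h) \mapsto \int_\Omega \DIV v_h \, q_h$ between $\bX_h$ (with the $\bL^2$-norm) and $M_h$ (with the $\|\GRAD\cdot\|_{\bL^2}$-norm). By the standard equivalence between inf-sup conditions and bounded right inverses — and by selecting the minimum $\bL^2$-norm preimage, which is the orthogonal projection of any solution onto $(\ker\DIV|_{\bX_h})^{\perp}$ and is therefore linear in the data — I obtain a linear operator $R_h : (M_h)^* \to \bX_h$ such that
\[
  \int_\Omega \DIV(R_h F)\, q_h = F(q_h) \quad \forall q_h \in M_h,
  \qquad
  \|R_h F\|_{\bL^2} \leq c \sup_{q_h \in M_h} \frac{F(q_h)}{\|\GRAD q_h\|_{\bL^2}}.
\]

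Next, given $v \in \Hunzd$, I define the functional
\[
  F_v(q_h) := \int_\Omega \DIV(v - \calC_h v) \, q_h
  = -\int_\Omega (v - \calC_h v)\SCAL\GRAD q_h,
\]
where the second equality uses integration by parts since $v - \calC_h v \in \Hunzd$. The Scott-Zhang bound \eqref{eq:SZprop} immediately gives $|F_v(q_h)| \leq c h \|v\|_{\bH^1} \|\GRAD q_h\|_{\bL^2}$. Setting $B_h v := R_h F_v$ and $\calF_h v := \calC_h v + B_h v$, the Fortin commuting relation \eqref{eq:Fortin} is immediate from the definitions, while \eqref{eq:fl2} follows from the triangle inequality together with the bound $\|B_h v\|_{\bL^2} \leq ch\|v\|_{\bH^1}$ coming from $R_h$ and the $\bL^2$-approximation property of $\calC_h$ in \eqref{eq:SZprop}.

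Finally, the idempotency $\calF_h^2 = \calF_h$ reduces to $\calF_h|_{\bX_h} = I$. Since the Scott-Zhang operator is itself a projection onto $\bX_h$, we have $\calC_h v_h = v_h$ for any $v_h \in \bX_h$; hence $F_{v_h} = 0$ and, by linearity of $R_h$, $B_h v_h = R_h 0 = 0$, so $\calF_h v_h = v_h$. The main obstacle — and in fact the only non-routine point — is the first step: extracting from \eqref{eq:wLBB} a right inverse of $\DIV$ that is linear and bounded in the $\bL^2$-norm, rather than the weaker $\bH^1$-bound that \eqref{eq:LBB} alone would yield. This is precisely where the stronger scaling built into \eqref{eq:wLBB} is used, and it is what makes the resulting Fortin projection satisfy the full $\bL^2$-approximation \eqref{eq:fl2}.
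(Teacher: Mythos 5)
Your proposal is correct, but it follows a genuinely different route from the paper. The paper defines $\calF_h v := z_h$ where $(z_h,p_h)\in\bX_h\times M_h$ solves the discrete saddle-point problem \eqref{eq:l2stokes}, i.e., $\calF_h v$ is the constrained $\bL^2$-projection of $v$ onto $\bX_h$ subject to $\int_\Omega q_h \DIV z_h = \int_\Omega q_h \DIV v$ for all $q_h\in M_h$; condition \eqref{eq:wLBB} enters twice, first as the well-posedness condition for \eqref{eq:l2stokes} and then again in the error analysis, where it yields $c\|\GRAD p_h\|_{\bL^2}\le \|v-\calF_h v\|_{\bL^2}$ and the estimate \eqref{eq:fl2} follows from a comparison with $\calC_h v$ and an integration by parts against $\GRAD p_h$. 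You instead run the classical Fortin-lemma construction in the correct norms: you extract from \eqref{eq:wLBB}, viewed as an inf--sup condition for the pairing $\bL^2$ (velocity) versus $H^1$-seminorm (pressure), a linear right inverse $R_h$ of the discrete divergence with $\|R_h F\|_{\bL^2}\le c\sup_{q_h}F(q_h)/\|\GRAD q_h\|_{\bL^2}$ (the minimum-norm preimage, which in this finite-dimensional setting is unproblematic), and set $\calF_h = \calC_h + R_h F_{(\cdot)}$ with $F_v(q_h)=-\int_\Omega(v-\calC_h v)\SCAL\GRAD q_h$. What your approach buys: the approximation property \eqref{eq:fl2} becomes a one-line triangle inequality, and the role of \eqref{eq:wLBB} is isolated cleanly in the mapping property of $R_h$. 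What the paper's approach buys: idempotency and \eqref{eq:Fortin} are automatic from the variational problem, whereas your idempotency argument rests on the identity $\calC_h v_h = v_h$ for $v_h\in\bX_h$ --- a true property of the Scott-Zhang operator, but one that is not among the properties \eqref{eq:SZprop}--\eqref{eq:SZh1} recalled in the paper, so you should state explicitly that you are using the fact that $\calC_h$ reproduces the finite element space $\bX_h$. With that caveat made explicit, your proof is complete.
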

\begin{proof}
Let $v\in\Hunzd$. Define $(z_h, p_h) \in \bX_h \times M_h$ as the solution of
\begin{equation}
  \begin{dcases}
    \int_\Omega z_h\SCAL w_h -\int_\Omega p_h \DIV w_h = \int_\Omega v\SCAL w_h, & \forall w_h \in \bX_h, \\
    \int_\Omega q_h \DIV z_h  = \int_\Omega q_h \DIV v, & \forall q_h \in M_h.
  \end{dcases}
\label{eq:l2stokes}
\end{equation}
Notice that \eqref{eq:wLBB} provides precisely necessary and sufficient conditions for this problem
to have a
unique solution.

Define $\calF_h v := z_h$ we claim that this is indeed a Fortin projection that satisfies
\eqref{eq:fl2}.
By construction, \eqref{eq:Fortin} holds (see the second equation in \eqref{eq:l2stokes}). To show
that this is indeed a
projection, assume that $v=v_h \in \bX_h$ in \eqref{eq:l2stokes}, setting $w_h = z_h - v_h$ we
readily obtain that
\[
  \| z_h - v_h \|_{\bL^2}^2 =0.
\]
It remains to show the approximation properties of this operator. We begin by noticing that
\eqref{eq:wLBB} implies
\begin{equation}
  c \| \GRAD p_h \|_{\bL^2} \leq  \sup_{w_h \in \bX_h} \frac{ \int_\Omega p_h \DIV w_h }{ \| w_h
\|_{\bL^2}}
    \leq \sup_{w_h \in \bX_h} \frac{ \int_\Omega (v-\calF_h v)\SCAL w_h  }{ \| w_h \|_{\bL^2}}
    \leq \| v - \calF_h v \|_{\bL^2},
\label{eq:boundgradp}
\end{equation}
where we used \eqref{eq:l2stokes}. To obtain the approximation property \eqref{eq:fl2} we use the
Scott-Zhang interpolation operator $\calC_h$,
\begin{align*}
  \| \calF_h v - v \|_{\bL^2}^2 &= \int_\Omega ( \calC_h v - v )\SCAL( \calF_h v - v )
  + \int_\Omega ( \calF_h v - \calC_h v )\SCAL( \calF_h v - v ) \\
  & \leq
  \| \calC_h v - v \|_{\bL^2}\| \calF_h v - v \|_{\bL^2}
  +  \int_\Omega ( \calF_h v - \calC_h v )\SCAL( \calF_h v - v ).
\end{align*}
We bound the first term using the approximation property \eqref{eq:SZprop} of $\calC_h$. To bound
the second
term we use problem \eqref{eq:l2stokes} with $w_h = \calF_h v - \calC_h v$, then
\[
  \int_\Omega ( \calF_h v - \calC_h v )\SCAL( \calF_h v - v )
  = \int_\Omega p_h \DIV( \calF_h v - \calC_h v )
  = \int_\Omega p_h \DIV( v - \calC_h v )
  = -\int_\Omega \GRAD p_h \SCAL ( v - \calC_h v ),
\]
we conclude applying the Cauchy-Schwarz inequality and using \eqref{eq:boundgradp}.
\end{proof}

\section{Smooth Domains}
\label{sec:Equiv}
Here we show that, provided \eqref{eq:LBB} holds and, moreover,
the domain $\Omega$ is such that Assumption~\ref{assumption1} is satisfied, then \eqref{eq:fl2}
holds and hence \eqref{eq:wLBB} holds as well. This is shown in the following.

\begin{thm}
\label{thm:lbbimplwlbb}
Assume the domain $\Omega$ is such that the solution to
\eqref{eq:contstokes} possesses $\bH^2$-elliptic regularity, \ie Assumption \ref{assumption1}
holds. Then \eqref{eq:LBB} implies that there is a Fortin operator
$\calF_h$ that satisfies \eqref{eq:fl2}.
\end{thm}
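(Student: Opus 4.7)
The plan is to mimic the proof of Theorem~\ref{thm:wlbbimplfl2}, but to define the Fortin operator through an $\bH^1$-based discrete Stokes system rather than an $\bL^2$-based one, and then recover $\bL^2$-approximation via an Aubin--Nitsche duality argument that exploits Assumption~\ref{assumption1}.

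First I would define $(\calF_h v,p_h)\in\bX_h\times M_h$ as the unique solution of
\begin{equation*}
\begin{dcases}
\int_\Omega \GRAD \calF_h v\SSCAL \GRAD w_h -\int_\Omega p_h\DIV w_h = \int_\Omega \GRAD v\SSCAL\GRAD w_h, & \forall w_h\in\bX_h, \\
\int_\Omega q_h\DIV \calF_h v = \int_\Omega q_h\DIV v, & \forall q_h\in M_h,
\end{dcases}
\end{equation*}
which is well posed precisely because of \eqref{eq:LBB}. The second equation is \eqref{eq:Fortin} by construction; when $v=v_h\in\bX_h$, testing the first equation with $w_h=\calF_h v_h-v_h$ and using that $\DIV(\calF_h v_h-v_h)$ is $L^2$-orthogonal to all of $M_h$ (in particular to $p_h$) gives $\calF_h v_h=v_h$, i.e.\ the projection property; and the standard saddle-point estimate yields $\|\calF_h v\|_{\bH^1}+\|p_h\|_{L^2}\leq c\|v\|_{\bH^1}$.

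For the approximation estimate I would set $e=v-\calF_h v$, and, using Assumption~\ref{assumption1}, let $(\psi,\theta)$ solve \eqref{eq:contstokes} with right-hand side $e$, so that $\|\psi\|_{\bH^2}+\|\theta\|_{H^1}\leq c\|e\|_{\bL^2}$. Integrating by parts (using $e\in\Hunzd$),
\begin{equation*}
\|e\|_{\bL^2}^2 = \int_\Omega \GRAD e\SSCAL\GRAD \psi - \int_\Omega(\DIV e)\,\theta.
\end{equation*}
The second term I would handle by subtracting $\calI_h\theta\in M_h$ (legal by \eqref{eq:Fortin}), yielding a bound of $ch\|e\|_{\bH^1}\|\theta\|_{H^1}\leq ch\|v\|_{\bH^1}\|e\|_{\bL^2}$. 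For the first term I would split $\psi=(\psi-\calC_h\psi)+\calC_h\psi$: the piece involving $\psi-\calC_h\psi$ is bounded by \eqref{eq:SZh1} combined with $\bH^1$-stability by $ch\|v\|_{\bH^1}\|e\|_{\bL^2}$; and for the piece involving $\calC_h\psi$, the first line in the definition of $\calF_h$ yields $\int_\Omega \GRAD e\SSCAL\GRAD \calC_h\psi=-\int_\Omega p_h\DIV \calC_h\psi$, which after using $\DIV\psi=0$ and integrating by parts (legitimate because $M_h\subset\Hun$ and $\calC_h\psi-\psi\in\Hunzd$) becomes $\int_\Omega \GRAD p_h\SCAL(\calC_h\psi-\psi)$.

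The hard part will be controlling $\|\GRAD p_h\|_{\bL^2}$. I would use the LBB-based estimate $\|p_h\|_{L^2}\leq c\|e\|_{\bH^1}\leq c\|v\|_{\bH^1}$ followed by the inverse inequality \eqref{eq:invM} to deduce $\|\GRAD p_h\|_{\bL^2}\leq ch^{-1}\|v\|_{\bH^1}$; paired with the standard Scott--Zhang bound $\|\calC_h\psi-\psi\|_{\bL^2}\leq ch^2\|\psi\|_{\bH^2}$, the remaining term is bounded by $ch\|v\|_{\bH^1}\|e\|_{\bL^2}$. The single negative power of $h$ from the inverse inequality is exactly offset by the two positive powers produced by $\bH^2$-regularity, and without Assumption~\ref{assumption1} this cancellation would fail. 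Collecting everything gives $\|e\|_{\bL^2}^2\leq ch\|v\|_{\bH^1}\|e\|_{\bL^2}$, which is \eqref{eq:fl2}.
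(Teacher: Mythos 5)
Your construction and overall strategy coincide with the paper's: you define $\calF_h$ through the same $\bH^1$-based discrete Stokes system \eqref{eq:stokes}, verify the projection and Fortin properties identically, and run the same duality argument with $(\psi,\theta)$ solving \eqref{eq:contstokes} with right-hand side $\pm(v-\calF_h v)$, treating the $\theta$-term and the $\psi-\calC_h\psi$ term exactly as the paper does. The only divergence is the term you single out as the hard part, $\int_\Omega \GRAD(v-\calF_h v)\SSCAL\GRAD(\calC_h \psi) = -\int_\Omega p_h\,\DIV(\calC_h\psi-\psi)$. The paper never differentiates $p_h$: it applies Cauchy--Schwarz directly, using $\|p_h\|_{L^2}\leq c\|v-\calF_h v\|_{\bH^1}$ together with $\|\DIV(\calC_h\psi-\psi)\|_{L^2}\leq c\|\psi - \calC_h\psi\|_{\bH^1}\leq ch\|\psi\|_{\bH^2}\leq ch\|v-\calF_h v\|_{\bL^2}$, i.e.\ only \eqref{eq:SZh1} and \eqref{eq:Cattabriga} are needed. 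You instead integrate by parts a second time and compensate the inverse estimate \eqref{eq:invM} with a second-order bound $\|\psi-\calC_h\psi\|_{\bL^2}\leq ch^2\|\psi\|_{\bH^2}$. This is correct and yields the same $ch\|v\|_{\bH^1}\|v-\calF_h v\|_{\bL^2}$, but it carries two costs that the paper's one-line estimate avoids: the $O(h^2)$ $\bL^2$-approximation property of the Scott--Zhang operator, while true, is not among the properties \eqref{eq:SZprop}--\eqref{eq:SZh1} recorded in the paper and would need to be cited or proved separately; and the inverse inequality \eqref{eq:invM} injects quasi-uniformity into a step where the paper's argument does not require it (quasi-uniformity is used elsewhere in the paper, e.g.\ in Theorems \ref{thm:wlbbimpllbb} and \ref{thm:fl2implwlbb}, but not in this proof). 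In short, your detour through $\GRAD p_h$ is sound but avoidable: pairing $p_h$ in $L^2$ against $\DIV(\calC_h\psi-\psi)$ closes the argument directly.
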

\begin{proof}
Let $v\in \Hunzd$. Define $(z_h,p_h) \in \bX_h \times M_h$ as the solution to the discrete Stokes
problem
\begin{equation}
  \begin{dcases}
    \int_\Omega \GRAD z_h \SSCAL \GRAD w_h -\int_\Omega p_h \DIV w_h = \int_\Omega \GRAD v \SSCAL \GRAD w_h, & \forall w_h
\in \bX_h, \\
    \int_\Omega q_h \DIV z_h = \int_\Omega q_h \DIV v, & \forall q_h \in M_h,
  \end{dcases}
\label{eq:stokes}
\end{equation}
where, in \eqref{eq:stokes}, the colon is used to denote the tensor product of matrices. Notice that \eqref{eq:LBB} implies that
this problem always has a unique solution.

Set $\calF_h v := z_h$. Proceeding as in the proof of Theorem~\ref{thm:wlbbimplfl2} we see that this is 
indeed a projection. Moreover, \eqref{eq:Fortin} holds by construction.
It remains to
show that \eqref{eq:fl2} is satisfied. To this end, analogously to the proof of
Theorem~\ref{thm:wlbbimplfl2},
we notice that \eqref{eq:LBB} implies
\[
  \| p_h \|_{L^2} \leq c \| \GRAD (\calF_h v - v )\|_{\bL^2}.
\]
We now argue by duality. Let $\psi$ and $\phi$ solve
\eqref{eq:contstokes} with $f=\calF_h v- v$. Assumption \eqref{eq:Cattabriga} then implies
\begin{align*}
  \| \calF_h v - v \|_{\bL^2}^2 &= \int_\Omega (\calF_h v - v)\SCAL(-\LAP\psi + \GRAD \theta) \\
  &= \int_\Omega \GRAD(\calF_h v -v ):\GRAD(\psi-\calC_h \psi) -\int_\Omega(\theta - \calI_h
\theta)\,\DIV(\calF_h v -v )\, \\
  & \phantom{=}+ \int_\Omega \GRAD(\calF_h v -v ):\GRAD(\calC_h \psi) - \int_\Omega (\calI_h \theta)\,\DIV(\calF_h v -v )\,
\end{align*}
Notice that since $\calI_h \theta\in M_h$, $\int_\Omega (\calI_h \theta)\,\DIV(\calF_h v -v )=0$.
Since $\DIV \psi = 0$,
using \eqref{eq:stokes}, the estimate for $p_h$, \eqref{eq:SZh1} and \eqref{eq:Cattabriga},
\[
  \int_\Omega \GRAD(\calF_h v -v ):\GRAD(\calC_h \psi) = \int_\Omega p_h \DIV (\calC_h \psi-\psi)
  \leq ch \| v - \calF_h v \|_{\bH^1} \| v - \calF_h v \|_{\bL^2}.
\]
A direct application of of \eqref{eq:SZh1}, \eqref{eq:SZprop} and \eqref{eq:Cattabriga} allows us to
obtain the following estimates:
\[
  \int_\Omega (\theta - \calI_h \theta)\,\DIV(\calF_h v -v ) 
  + \int_\Omega \GRAD(\calF_h v -v )\SSCAL\GRAD(\psi-\calC_h \psi)
  \leq
   c h \| \calF_h v - v \|_{\bL^2}\| v \|_{\bH^1}
\]

We conclude using a stability estimate for \eqref{eq:stokes}
\[
  \| \calF_h v - v \|_{\bL^2} \leq c h \| \calF_h v - v \|_{\bH^1} \leq c h \| v \|_{\bH^1},
\]
which, given \eqref{eq:LBB}, is uniform in $h$.
\end{proof}

\section{The Weighted LBB condition}
\label{sec:section5}

In relation to the construction of uniform preconditioners for
discretizations of the time dependent Stokes problem, Mardal, Sch\"oberl and Winther, \cite{schoberlwinther}, 
consider the following inf--sup condition,
\begin{equation}
  c\| q_h \|_{H^1+\epsilon^{-1} L^2} \leq 
  \sup_{v_h \in \bX_h} \frac{ \int_\Omega \DIV v_h q_h}{\|v_h\|_{\bL^2 \cap \epsilon \bH^1}}, 
  \quad \forall q_h \in M_h.
\label{eq:winf-sup}
\end{equation}
where
\[
\| q \|_{H^1+\epsilon^{-1} L^2}^2= \inf_{q_1+q_2=q} 
  \left( \|q_1\|_{H^1}^2+ \epsilon^{-2} \|q_2\|_{L^2}^2 \right),
\]
and
\[
  \|v\|_{\bL^2 \cap \epsilon \bH^1}^2 = \|v\|_{\bL^2}^2 + \epsilon^2 \|v\|_{\bH^1}^2.
\]

By constructing a Fortin projection operator that is $\bL^2$-bounded they have showed, on quasi-uniform meshes, 
that the inf--sup condition \eqref{eq:winf-sup} holds for the lowest order Taylor-Hood element in two dimension.
In addition, they proved the same result, on shape regular meshes, for the mini-element.
Here, we show that \eqref{eq:winf-sup} holds if we assume \eqref{eq:wLBB}. A simple consequence of this 
result is that, on quasi-uniform meshes, \eqref{eq:winf-sup} holds for any order Taylor-Hood elements
in two and three dimensions.

\begin{thm}
\label{thm:wlbbimplweightlbb}
Let $\Omega$ be star shaped with respect to ball.
If the spaces $\bX_h$ and $M_h$ are such that \eqref{eq:wLBB} is satisfied,
then the inf--sup condition \eqref{eq:winf-sup} holds with a constant that 
does not depend on $\epsilon$ or $h$.
\end{thm}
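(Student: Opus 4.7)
The plan is to combine our Fortin operator with a case split based on the relative sizes of $h$ and $\epsilon$. By Theorem~\ref{thm:wlbbimplfl2} together with Lemma~\ref{lem:fl2implfh1}, the assumption \eqref{eq:wLBB} furnishes a Fortin operator $\calF_h:\Hunzd\to\bX_h$ satisfying both \eqref{eq:fh1} and \eqref{eq:fl2}. The argument also relies on the continuous analogue of \eqref{eq:winf-sup}, valid on star-shaped domains through a Bogovski-type construction (as in \cite{schoberlwinther}): for every $q\in H^1\cap\tildeLdeux$ and every $\epsilon>0$ there exists $v\in\Hunzd$ with
\[
  \int_\Omega (\DIV v)\,q \geq c_0\,\|q\|_{H^1+\epsilon^{-1}L^2}\,\|v\|_{\bL^2\cap\epsilon\bH^1},
\]
where $c_0$ depends only on $\Omega$.

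In the regime $h\leq\epsilon$, given $q_h\in M_h$, I would take $v\in\Hunzd$ from the continuous inf--sup above with $q=q_h$ and set $v_h=\calF_h v$. The Fortin property \eqref{eq:Fortin} preserves the numerator, $\int_\Omega(\DIV v_h)q_h=\int_\Omega(\DIV v)q_h$. For the denominator, \eqref{eq:fh1} yields $\epsilon\|v_h\|_{\bH^1}\leq c\epsilon\|v\|_{\bH^1}$, while \eqref{eq:fl2} combined with $h\leq\epsilon$ yields $\|v_h\|_{\bL^2}\leq\|v\|_{\bL^2}+ch\|v\|_{\bH^1}\leq\|v\|_{\bL^2}+c\epsilon\|v\|_{\bH^1}$. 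Hence $\|v_h\|_{\bL^2\cap\epsilon\bH^1}\leq c\|v\|_{\bL^2\cap\epsilon\bH^1}$, and \eqref{eq:winf-sup} follows at once.

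In the complementary regime $\epsilon<h$, the weighted norms degenerate to their unweighted counterparts on the discrete spaces. The inverse inequality \eqref{eq:invX} together with $\epsilon<h$ yields $\epsilon\|v_h\|_{\bH^1}\leq c(\epsilon/h)\|v_h\|_{\bL^2}\leq c\|v_h\|_{\bL^2}$ for every $v_h\in\bX_h$, so $\|v_h\|_{\bL^2\cap\epsilon\bH^1}\leq c\|v_h\|_{\bL^2}$. On the pressure side, since $q_h\in\tildeLdeux$, the Poincar\'e inequality gives $\|q_h\|_{H^1+\epsilon^{-1}L^2}\leq\|q_h\|_{H^1}\leq c\|\GRAD q_h\|_{\bL^2}$. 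Applying \eqref{eq:wLBB} to select $v_h^\star\in\bX_h$ with $\int_\Omega(\DIV v_h^\star)q_h\geq c\|\GRAD q_h\|_{\bL^2}\|v_h^\star\|_{\bL^2}$ and combining these three observations produces \eqref{eq:winf-sup} in this regime.

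The principal obstacle I anticipate is establishing (or correctly invoking) the continuous weighted inf--sup, which is exactly the point at which the star-shaped assumption on $\Omega$ is used. The remainder of the argument is a transparent manipulation of the Fortin operator, the Poincar\'e inequality, and the discrete inverse estimate, with careful bookkeeping of the two regimes of $h$ versus $\epsilon$.
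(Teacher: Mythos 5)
Your proposal is correct and is essentially the paper's own proof: the same split into the regimes $h\leq\epsilon$ and $\epsilon<h$, the same invocation of the continuous weighted inf--sup on star-shaped domains from \cite{schoberlwinther}, the same use of the Fortin operator supplied by Theorem~\ref{thm:wlbbimplfl2} together with Lemma~\ref{lem:fl2implfh1} to transfer that continuous inf--sup to the discrete spaces when $h\leq\epsilon$, and the same direct use of \eqref{eq:wLBB} with the inverse inequality \eqref{eq:invX} when $\epsilon<h$. The only cosmetic difference is that you select near-optimal elements $v$ and $v_h^\star$ rather than manipulating suprema, which changes nothing of substance.
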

\begin{proof}
We consider two cases: $\epsilon \ge h$ and $\epsilon < h$.

Given that the domain $\Omega$ is star shaped with respect to a ball, 
we can conclude (\cite{schoberlwinther}) that the following 
continuous inf--sup condition holds,
\begin{equation}\label{conweight}
c \| q \|_{H^1+\epsilon^{-1} L^2} \leq \sup_{v \in \Hunzd}
  \frac{ \int_\Omega q\, \DIV v }{ \|v\|_{\bL^2 \cap \epsilon \bH^1}},
  \quad \forall q \in \tildeLdeux,
\end{equation}
with a constant $c$ independent of $\epsilon$.

We first assume that $\epsilon \ge h$. Using \eqref{conweight} for $q_h \in M_h$ we have, 
\begin{align*}
  c \| q_h \|_{H^1+\epsilon^{-1} L^2} &\leq
  \sup_{v \in \Hunzd } \frac{ \int q_h\,\DIV v }{ \| v \|_{\bL^2 \cap \epsilon \bH^1 } } =
  \sup_{v \in \Hunzd } \frac{ \int_\Omega q_h\,\DIV (\calF_h v)  }{\|\calF_h v\|_{\bL^2 \cap \epsilon \bH^1}}
    \frac{\|\calF_h v\|_{\bL^2 \cap \epsilon \bH^1}}{\|v\|_{\bL^2\cap \epsilon \bH^1} } \\
  &\leq \sup_{v_h \in \bX_h} \frac{ \int_\Omega q_h\,\DIV v_h  }
    {\|v_h\|_{\bL^2 \cap \epsilon \bH^1} }
  \sup_{ v \in \Hunzd} \frac{\|\calF_h v\|_{\bL^2 \cap \epsilon \bH^1}}{\|v\|_{\bL^2 \cap \epsilon \bH^1}},
\end{align*}
where we used that, since \eqref{eq:wLBB} holds, Theorem~\ref{thm:wlbbimplfl2} shows that 
there exists a Fortin operator $\calF_h$ that satisfies
\eqref{eq:Fortin}. By Lemma~\ref{lem:fl2implfh1} and
the approximation properties \eqref{eq:fl2} of the Fortin operator,
\begin{align*}
  \|\calF_h v \|_{\bL^2 \cap \epsilon \bH^1} &\leq
  c \left( \|\calF_h v \|_{\bL^2} + \epsilon \| \calF_h v \|_{\bH^1}   \right) \leq
  c \left( \| v \|_{\bL^2} + \|v - \calF_h v \|_{\bL^2} + \epsilon \| v \|_{\bH^1}   \right) \\
  &\leq c \left(  \|v\|_{\bL^2} + (\epsilon + h)\| v \|_{\bH^1} \right)
  \leq c \left(  \|v\|_{\bL^2} + 2\epsilon\| v \|_{\bH^1} \right)
  \leq c\| v \|_{\bL^2 \cap \epsilon \bH^1},
\end{align*}
where we used that $h \leq \epsilon$.

On the other hand, if $\epsilon < h$ we use $q_1 = q_h$ and $q_2 = 0$ in the definition of the weighted norm for the pressure space.
Condition \eqref{eq:wLBB} then implies
\[
  \| q_h \|_{H^1+\epsilon^{-1} L^2} \leq
 c \| \GRAD q_h \|_{\bL^2} \leq c \sup_{v_h \in \bX_h} \frac{ \int_\Omega q_h\,\DIV v_h  }{ \| v_h \|_{\bL^2} }
 \leq c \sup_{v_h \in \bX_h} \frac{ \int_\Omega q_h\, \DIV v_h  }{ \| v_h \|_{\bL^2 \cap \epsilon \bH^1} }
 \sup_{v_h \in \bX_h} \frac{ \| v_h \|_{\bL^2 \cap \epsilon \bH^1} }{ \| v_h \|_{\bL^2} }.
\]
By the inverse inequality \eqref{eq:invX},
\[
  \frac{ \| v_h \|_{\bL^2 \cap \epsilon \bH^1} }{ \| v_h \|_{\bL^2} } \leq
  c\left( 1 + \epsilon h^{-1} \right).
\]
Conclude using that $\epsilon < h $.
\end{proof}

\section{Concluding Remarks}
\label{sec:conclusion}
There seems to be one main drawback to our methods of proof. Namely, all our results rely heavily on
the fact that we have a quasi-uniform mesh. However, at the present moment we do not know whether this
condition can be removed. Finally, it will be interesting to see if \eqref{eq:LBB} is in fact equivalent to
\eqref{eq:wLBB} on domains that do not satisfy the regularity assumption 
\eqref{eq:Cattabriga} (\eg non convex polyhedral domains).

On the other hand, it seems to us that condition \eqref{eq:wLBB} must be regarded as the most important one.
Our results show that, under the sole assumption that the mesh is quasi-uniform, this condition implies
the classical condition \eqref{eq:LBB} (Theorem~\ref{thm:wlbbimpllbb}). Moreover, as shown in 
Theorem~\ref{thm:wlbbimplweightlbb}, this condition implies the weighted inf--sup condition \eqref{eq:winf-sup} on quasi-uniform meshes.

\bibliographystyle{plain}
\bibliography{biblio}

\begin{thebibliography}{10}

\bibitem{MR82m:26014}
M.E. Bogovski\u{\i}.
\newblock Solutions of some problems of vector analysis, associated with the
  operators {${\rm div}$} and {${\rm grad}$}.
\newblock In {\em Theory of cubature formulas and the application of functional
  analysis to problems of mathematical physics}, volume 1980 of {\em Trudy Sem.
  S. L. Soboleva, No. 1}, pages 5--40, 149. Akad. Nauk SSSR Sibirsk. Otdel.
  Inst. Mat., Novosibirsk, Russia, 1980.

\bibitem{BF91}
F.~Brezzi and M.~Fortin.
\newblock {\em Mixed and Hybrid Finite Element Methods}.
\newblock Springer-Verlag, New York, NY, 1991.

\bibitem{MR977489}
M.~Dauge.
\newblock Stationary {S}tokes and {N}avier-{S}tokes systems on two- or
  three-dimensional domains with corners. {I}. {L}inearized equations.
\newblock {\em SIAM J. Math. Anal.}, 20(1):74--97, 1989.

\bibitem{MR1880723}
R.G. Dur{\'a}n and M.A. Muschietti.
\newblock An explicit right inverse of the divergence operator which is
  continuous in weighted norms.
\newblock {\em Studia Math.}, 148(3):207--219, 2001.

\bibitem{MR2050138}
A.~Ern and J.-L. Guermond.
\newblock {\em Theory and practice of finite elements}, volume 159 of {\em
  Applied Mathematical Sciences}.
\newblock Springer-Verlag, New York, 2004.

\bibitem{GR86}
V.~Girault and P.-A. Raviart.
\newblock {\em Finite Element Methods for {N}avier-{S}tokes Equations. Theory
  and Algorithms}.
\newblock Springer Series in Computational Mathematics. Springer-Verlag,
  Berlin, Germany, 1986.

\bibitem{MR1961943}
V.~Girault and L.R. Scott.
\newblock A quasi-local interpolation operator preserving the discrete
  divergence.
\newblock {\em Calcolo}, 40(1):1--19, 2003.

\bibitem{MR2210084}
J.-L. Guermond.
\newblock Finite-element-based {F}aedo-{G}alerkin weak solutions to the
  {N}avier-{S}tokes equations in the three-dimensional torus are suitable.
\newblock {\em J. Math. Pures Appl. (9)}, 85(3):451--464, 2006.

\bibitem{MR2334774}
J.-L. Guermond.
\newblock Faedo-{G}alerkin weak solutions of the {N}avier-{S}tokes equations
  with {D}irichlet boundary conditions are suitable.
\newblock {\em J. Math. Pures Appl. (9)}, 88(1):87--106, 2007.

\bibitem{MR2520170}
J.-L. Guermond.
\newblock The {LBB} condition in fractional {S}obolev spaces and applications.
\newblock {\em IMA J. Numer. Anal.}, 29(3):790--805, 2009.

\bibitem{schoberlwinther}
K.-A. Mardal, J.~Sch{\"oberl}, and R.~Winther.
\newblock A uniform inf--sup condition with applications to preconditioning.
\newblock arXiv:1201.1513v1, 2012.

\bibitem{MR2833487}
M.A. Olshanski\u{\i}.
\newblock Multigrid analysis for the time dependent {S}tokes problem.
\newblock {\em Math. Comp.}, 81(277):57--79, 2012.

\bibitem{SZ90}
L.R. Scott and S.~Zhang.
\newblock Finite element interpolation of nonsmooth functions satisfying
  boundary conditions.
\newblock {\em Math. Comp.}, 54(190):483--493, 1990.

\bibitem{MR1846644}
R.~Temam.
\newblock {\em Navier-{S}tokes equations}.
\newblock AMS Chelsea Publishing, Providence, RI, 2001.
\newblock Reprint of the 1984 edition.

\end{thebibliography}

\end{document}